\newtheorem{theorem}{Theorem}
\newtheorem{proposition}[theorem]{Proposition}
\newtheorem{corollary}[theorem]{Corollary}
\theoremstyle{definition}
\newtheorem{remark}[theorem]{Remark}
\newcommand{\C}{\mathbb{C}}
\newcommand{\scrC}{\mathscr{C}}
\newcommand{\scrO}{\mathscr{O}}
\newcommand{\id}{\operatorname{id}}
\title{Absolute neighbourhood retracts \\ and spaces of holomorphic maps \\ from Stein manifolds to Oka manifolds}
\author{Finnur L\'arusson}
\address{School of Mathematical Sciences, University of Adelaide, Adelaide SA 5005, Australia}
\email{finnur.larusson@adelaide.edu.au}
\subjclass[2010]{Primary 32E10.  Secondary 32H02, 32Q28, 54C35, 54C55, 55M15.}
\keywords{Stein manifold, Oka manifold, parametric Oka property, deformation retract, absolute neighbourhood retract, mixed model structure.}
\date{17 June 2013.  Latest minor changes 19 August 2013.}
\thanks{The author was supported by Australian Research Council grant DP120104110.}
\thanks{The author is grateful to Jaka Smrekar for helpful discussions.}
\begin{document}

\begin{abstract}  
The basic result of Oka theory, due to Gromov, states that every continuous map $f$ from a Stein manifold $S$ to an elliptic manifold $X$ can be deformed to a holomorphic map.  It is natural to ask whether this can be done for all $f$ at once, in a way that depends continuously on $f$ and leaves $f$ fixed if it is holomorphic to begin with.  In other words, is $\scrO(S,X)$ a deformation retract of $\scrC(S,X)$?  We prove that it is if $S$ has a strictly plurisubharmonic Morse exhaustion with finitely many critical points; in particular, if $S$ is affine algebraic.  The only property of $X$ used in the proof is the parametric Oka property with approximation with respect to finite polyhedra, so our theorem holds under the weaker assumption that $X$ is an Oka manifold.  Our main tool, apart from Oka theory itself, is the theory of absolute neighbourhood retracts.  We also make use of the mixed model structure on the category of topological spaces.
\end{abstract}

\maketitle

\noindent
The basic result of Oka theory, due to Gromov \cite{Gromov}, states that every continuous map $f$ from a Stein manifold $S$ to an elliptic manifold $X$ can be deformed to a holomorphic map.  It is natural to ask whether this can be done for all $f$ at once, in a way that depends continuously on $f$ and leaves $f$ fixed if it is holomorphic to begin with.  In other words, is $\scrO(S,X)$ a deformation retract\footnote{Following Hatcher \cite{Hatcher} and May \cite{May}, we use the term \emph{deformation retract} for what more commonly used to be called a \emph{strong deformation retract}.} of $\scrC(S,X)$?  Our main result, valid under the weaker assumption that $X$ is an Oka manifold, is the following.

\begin{theorem}  \label{t:main-result}
Let $X$ be an Oka manifold and let $S$ be a Stein manifold with a strictly plurisubharmonic Morse exhaustion with finitely many critical points.  Then $\scrO(S,X)$ is a deformation retract of $\scrC(S,X)$.
\end{theorem}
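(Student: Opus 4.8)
The plan is to exploit the classical characterisation of deformation retracts: the inclusion of a closed subspace $A\hookrightarrow Y$ exhibits $A$ as a (strong) deformation retract of $Y$ precisely when it is a Hurewicz cofibration and a homotopy equivalence. I would therefore reduce the theorem to two assertions about the inclusion $\iota\colon\scrO(S,X)\hookrightarrow\scrC(S,X)$, both spaces carrying the compact-open topology: that $\iota$ is a Hurewicz cofibration, and that it is a homotopy equivalence. Observe at the outset that $\iota$ is a closed inclusion, since a locally uniform limit of holomorphic maps is holomorphic, so $\scrO(S,X)$ is closed in $\scrC(S,X)$; and since $S$ is a $\sigma$-compact locally compact metrizable manifold and $X$ is metrizable, both function spaces are metrizable. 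The whole strategy rests on two inputs, that both spaces are absolute neighbourhood retracts and that $\iota$ is a weak homotopy equivalence, which I would then combine using the mixed model structure.

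First I would establish the ANR facts. That $\scrC(S,X)$ is a metrizable ANR should follow from the general theory of mapping spaces into an ANR: $X$ is a manifold and hence an ANR, while the hypothesis that $S$ carries a strictly plurisubharmonic Morse exhaustion with finitely many critical points forces $S$ to have the homotopy type of a finite CW complex (Morse theory attaches only finitely many cells), which is exactly the finiteness needed to conclude that the mapping space is an ANR. The delicate point is that $\scrO(S,X)$ is an ANR. Here I would use that $\scrO(S,X)$ is closed in the ANR $\scrC(S,X)$ and show that it is in fact a neighbourhood retract, since a neighbourhood retract of an ANR is an ANR. This is where Oka theory is indispensable: the parametric Oka property with approximation provides, for families of continuous maps that are suitably close to being holomorphic, a continuous deformation pushing them into $\scrO(S,X)$, and this is what furnishes $\scrO(S,X)$ with the local structure of an ANR. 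I expect this step to be the main obstacle, being the only place where genuine complex-analytic input beyond the bare weak equivalence is required, and the place where the hypotheses on $S$ and the Oka condition on $X$ become essential.

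Next I would verify that $\iota$ is a weak homotopy equivalence, which is the soft consequence of Oka theory. Applying the parametric Oka property (contained in the parametric Oka property with approximation) to the finite polyhedral pairs $(B^n,\partial B^n)$ shows that a map $B^n\to\scrC(S,X)$ restricting to a map $\partial B^n\to\scrO(S,X)$ can be deformed, rel $\partial B^n$, into $\scrO(S,X)$. This yields that $\iota$ induces a bijection on path components and isomorphisms on all higher homotopy groups at every basepoint, which is exactly the assertion that $\iota$ is a weak homotopy equivalence.

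Finally I would assemble these ingredients in the mixed model structure, whose weak equivalences are the weak homotopy equivalences, whose fibrations are the Hurewicz fibrations, and in which every object is fibrant while the cofibrant objects are exactly those of CW homotopy type. By Milnor's theorem every ANR has the homotopy type of a CW complex, so both $\scrO(S,X)$ and $\scrC(S,X)$ are cofibrant. The inclusion of a closed ANR into an ANR is a Hurewicz cofibration, hence a cofibration in the mixed model structure, and since it is also a weak homotopy equivalence it is a trivial cofibration between cofibrant and fibrant objects; the standard lifting argument then produces a retraction of $\scrC(S,X)$ onto $\scrO(S,X)$ together with a deformation of $\scrC(S,X)$ onto $\scrO(S,X)$ fixing $\scrO(S,X)$ throughout. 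Equivalently, in classical language, $\iota$ is a genuine homotopy equivalence by Whitehead's theorem because both spaces have CW homotopy type, and a Hurewicz cofibration that is a homotopy equivalence is the inclusion of a deformation retract. This completes the plan.
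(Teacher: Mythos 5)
Your overall architecture coincides with the paper's: show both spaces are ANRs, note that the inclusion $\scrO(S,X)\hookrightarrow\scrC(S,X)$ is a weak homotopy equivalence by Oka theory, and combine these in the mixed model structure (closed ANR pair $\Rightarrow$ h-cofibration $\Rightarrow$ m-cofibration between m-cofibrant objects, acyclic m-cofibration $\Rightarrow$ deformation retract). The steps concerning $\scrC(S,X)$, the weak equivalence, and the final assembly are all sound and match the paper's Propositions on these points.

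The genuine gap is in the one step you yourself flag as the main obstacle: proving that $\scrO(S,X)$ is an ANR. Your proposal is to exhibit $\scrO$ as a neighbourhood retract of the ANR $\scrC$, with the retraction supplied by ``the parametric Oka property with approximation... for families of continuous maps that are suitably close to being holomorphic.'' But the parametric Oka property is only available for cofibrant parameter spaces --- finite polyhedra, compact subsets of Euclidean space, or CW pairs via the q-equivalence --- whereas the family you would need to deform is parametrised by a neighbourhood of $\scrO$ inside the infinite-dimensional, non-locally-compact, non-m-cofibrant-a-priori space $\scrC$. Applying the parametric Oka property with such a parameter space is precisely the statement the whole theorem is trying to establish (indeed, by the paper's Corollary~\ref{c:equivalence}, once $\scrC$ is known to be ANR, ``$\scrO$ is a neighbourhood retract of $\scrC$ in a way that can be upgraded to a deformation retraction'' and ``$\scrO$ is ANR'' are essentially the same assertion), so the argument as sketched is circular. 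The paper avoids this by verifying the Dugundji--Lefschetz characterisation of ANRs for $\scrO$ directly: this reduces everything to extension problems over simplices, i.e.\ finite polyhedral parameter spaces where the parametric Oka property with approximation genuinely applies, and requires substantial additional input --- holomorphic vector bundle neighbourhoods of graphs giving convex (hence contractible) local models, a nested family of holomorphically convex compacts $K_n\subset L_n$ onto which $S$ deformation-retracts (this is where the Morse hypothesis on $S$ enters beyond finite domination, which your sketch does not use), Str\o m's theorem to extend maps from $S\times\partial\sigma\cup L_n\times\sigma$ to $S\times\sigma$, and a skeleton-by-skeleton induction with quantitative approximation control. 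None of this is recoverable from your outline, so the proposal as it stands does not yield a proof.
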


Theorem \ref{t:main-result} follows from Propositions \ref{p:key} and \ref{p:C-is-ANR} and Theorem \ref{t:O-is-ANR} below.

\begin{remark}  \label{r:hypothesis}
(a)  The assumption on $S$ is satisfied if $S$ is affine algebraic.  Namely, algebraically embed $S$ into $\C^n$ for some $n$.  For a generic choice of $a\in\C^n$, the smooth strictly plurisubharmonic exhaustion $S\to[0,\infty)$, $z\mapsto\lVert z-a\rVert^2$, is a Morse function with finitely many critical points.

(b)  The properties of $S$ that we use to prove Theorem \ref{t:main-result} are, first, that $S$ is finitely dominated, meaning that $\id_S$ is homotopic to a map with relatively compact image (see Proposition \ref{p:C-is-ANR}) and, second, that for every compact subset $K$ of $S$, there are compact subsets $K\subset\cdots\subset K_1\subset L_1\subset K_0\subset L_0$ such that for every $n\geq 0$, $K_n\subset L_n^\circ$, $K_n$ is holomorphically convex, and $S$ deformation-retracts onto $L_n$ (see Theorem \ref{t:O-is-ANR}).  Both properties follow from the hypothesis that $S$ has a strictly plurisubharmonic Morse exhaustion with finitely many critical points.

(c)  Theorem \ref{t:main-result} provides what we might call \emph{local} versions of well-known Oka properties.  The simplest case is as follows.  If $S$ is Stein and $X$ is Oka, then every path in $\scrC(S,X)$ joining two points in $\scrO(S,X)$ can be deformed, keeping the end points fixed, to a path in $\scrO(S,X)$.  When $S$ has a strictly plurisubharmonic Morse exhaustion with finitely many critical points, Theorem \ref{t:main-result} implies that every neighbourhood $U$ in $\scrC(S,X)$ of a point in $\scrO(S,X)$ contains a neighbourhood $V$ such that every path in $V$ joining two points in $\scrO(S,X)$ can be deformed within $U$, keeping the end points fixed, to a path in $\scrO(S,X)$.
\end{remark}

We say that a pair $(S,X)$ of complex manifolds satisfies the \emph{parametric Oka property} with respect to a topological space $P$ and a subspace $Q$ of $P$, or with respect to the inclusion $Q\hookrightarrow P$, if for every commuting square
\[ \xymatrix{ Q \ar[r] \ar@{^{(}->}[d] & \scrO(S,X) \ar@{^{(}->}[d] \\ P \ar[r]^<<<<{f} & \scrC(S,X) } \]
of continuous maps, $f$ can be deformed, keeping $f\vert Q$ fixed, until the square has a lifting, that is, until the deformed $f$ maps $P$ into $\scrO(S,X)$.  More explicitly, there is a continuous map $F:P\times[0,1]\to\scrC(S,X)$ such that:
\begin{itemize}
\item  $F(\cdot,0)=f$,
\item  $F(\cdot,t)\vert Q=f\vert Q$ for all $t\in[0,1]$,
\item  $F(P\times\{1\})\subset\scrO(S,X)$.
\end{itemize}

Here, $\scrC(S,X)$ is equipped with the compact-open topology, in which $\scrO(S,X)$ is a closed subspace.  In the following, we shall often abbreviate $\scrC(S,X)$ as $\scrC$, and $\scrO(S,X)$ as $\scrO$.  We will refer to $P$ as a \emph{parameter space}.  Note that we are considering the parametric Oka property without approximation and interpolation.  I hope to include these in future work.

One of the several equivalent defining properties of an Oka manifold $X$ is the parametric Oka property with approximation (or interpolation or both) with respect to finite polyhedra and subpolyhedra for maps from an arbitrary Stein manifold $S$ to $X$.  In fact, Oka manifolds are known to satisfy this property with respect to compact subsets of Euclidean spaces \cite[Thm.~5.4.4]{Forstneric2011}, and elliptic manifolds are known to satisfy it with respect to all compact topological spaces \cite[Thm.~6.2.2]{Forstneric2011}\footnote{The theorem is stated for compact Hausdorff parameter spaces, but since $\scrO$ and $\scrC$ are Hausdorff, it holds for all compact parameter spaces.}.  Elliptic manifolds are Oka, and Stein Oka manifolds are elliptic.  No Oka manifolds are known not to be elliptic, but many Oka manifolds are not known to be elliptic.  Complex Lie groups and their homogeneous spaces are elliptic; for more examples of Oka manifolds, see \cite[Sec.~5.5]{Forstneric2011}.  For more background on Oka theory, see \cite{Forstneric2011} and \cite{Forstneric-Larusson}.  Our question is whether every pair $(S,X)$, where $S$ is Stein and $X$ is Oka, satisfies the parametric Oka property with respect to the universal parameter spaces $\scrO(S,X)\hookrightarrow\scrC(S,X)$.

We remind the reader of the two classical model structures on the category of topological spaces that have been known since about 1970.  In the h-structure of Str\o m, the weak equivalences are the homotopy equivalences, and the fibrations and cofibrations are those of Hurewicz.  In the q-structure of Quillen, the weak equivalences are the weak homotopy equivalences, the fibrations are those of Serre, and the cofibrations are retracts of relative generalised cell complexes.    We will make use of a third structure, the mixed structure or m-structure of Cole \cite{Cole}, which appeared only in 2006, in which the weak equivalences are the weak homotopy equivalences, and the fibrations are those of Hurewicz.  The m-cofibrant spaces are those with the homotopy type of a CW complex.  For more background, see \cite[Ch.~17]{May-Ponto}.

A proof of the following result is contained in \cite[Sec.~16]{Larusson2004}.

\begin{theorem}  \label{t:my-old-result}
For manifolds $S$ and $X$, the following are equivalent.
\begin{enumerate}
\item[(i)]  $(S,X)$ satisfies the parametric Oka property with respect to finite polyhedra and subpolyhedra. 
\item[(ii)]  $(S,X)$ satisfies the parametric Oka property with respect to q-cofibrations $Q\hookrightarrow P$, where $P$ and $Q$ are q-cofibrant.
\item[(iii)]  The inclusion $\scrO(S,X)\hookrightarrow\scrC(S,X)$ is a q-equivalence.
\end{enumerate}
\end{theorem}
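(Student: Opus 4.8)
The plan is to prove the cyclic implications (i) $\Rightarrow$ (iii) $\Rightarrow$ (ii) $\Rightarrow$ (i), working throughout in the q-model structure on topological spaces. I will use three standard facts about this structure: every space is q-fibrant; the generating cofibrations are the sphere inclusions $S^{n-1}\hookrightarrow D^n$; and a Serre fibration that is a q-equivalence (an acyclic fibration) has the right lifting property against every q-cofibration. The implication (ii) $\Rightarrow$ (i) is then immediate, since a finite polyhedron is q-cofibrant and the inclusion of a subpolyhedron is a q-cofibration, so (i) is merely the restriction of (ii) to a special class of pairs.

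For (i) $\Rightarrow$ (iii), I would note that each pair $(D^n,S^{n-1})$ is homeomorphic to a finite polyhedron with subpolyhedron, and that the parametric Oka property with respect to $S^{n-1}\hookrightarrow D^n$ says exactly this: given $\alpha:S^{n-1}\to\scrO$ and $\beta:D^n\to\scrC$ with $\beta\vert S^{n-1}=i\circ\alpha$ (writing $i$ for the inclusion $\scrO\hookrightarrow\scrC$), there is $\gamma:D^n\to\scrO$ extending $\alpha$ with $i\circ\gamma\simeq\beta$ rel $S^{n-1}$. This homotopy right lifting property against all sphere inclusions is the classical characterization of $i$ as a weak homotopy equivalence: running over $n\geq 0$ it encodes surjectivity of $\pi_n$ and injectivity of $\pi_{n-1}$ at every basepoint. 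That is precisely (iii), and since $\scrC$ and $\scrO$ are Hausdorff no basepoint pathologies intervene.

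The real content is (iii) $\Rightarrow$ (ii), where the plan is to trade the weak equivalence for a genuine lifting problem. Factor $i$ in the q-model structure as $\scrO\xrightarrow{j}E\xrightarrow{p}\scrC$ with $j$ a trivial q-cofibration and $p$ a Serre fibration; by $2$-out-of-$3$, $p$ is a q-equivalence, hence an acyclic fibration, so it lifts against every q-cofibration. Given a commuting square for $i$ with left edge a q-cofibration $Q\hookrightarrow P$, top map $g:Q\to\scrO$ and bottom map $f:P\to\scrC$, push $g$ into $E$ along $j$ and lift $f$ strictly through $p$ to get $\ell:P\to E$ with $p\ell=f$ and $\ell$ extending $jg$. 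Because every space is q-fibrant, the trivial cofibration $j$ is the inclusion of a strong deformation retract: there are a retraction $r:E\to\scrO$ with $rj=\id$ and a homotopy $H:E\times[0,1]\to E$ from $\id_E$ to $jr$, stationary on $j(\scrO)$. Then $\tilde g=r\ell:P\to\scrO$ extends $g$, and $F=p\circ H\circ(\ell\times\id):P\times[0,1]\to\scrC$ is a homotopy from $f$ to $i\tilde g$ that is constant on $Q$, since $\ell$ carries $Q$ into $j(\scrO)$, where $H$ is stationary. This $F$ is exactly the deformation demanded by the parametric Oka property with respect to $Q\hookrightarrow P$; note that cofibrancy of $P$ and $Q$ is not even needed.

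The step I expect to be the main obstacle is (iii) $\Rightarrow$ (ii): the factorization of $i$ and the strong-deformation-retract property of trivial cofibrations into fibrant objects are the load-bearing facts, and the delicate point is verifying that $F$ is stationary on $Q$, which is what promotes the bare existence of a lift into the \emph{relative} deformation required. By contrast, (ii) $\Rightarrow$ (i) is a tautological restriction and (i) $\Rightarrow$ (iii) is a direct reading of a standard characterization of weak homotopy equivalences.
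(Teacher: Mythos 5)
Your proposal is correct, but there is no in-paper proof to compare it against: the paper explicitly defers the proof of this theorem to \cite[Sec.~16]{Larusson2004} and only remarks that (iii) $\Rightarrow$ (ii) is the nontrivial implication. Your argument is the standard model-categorical one and is sound throughout: (ii) $\Rightarrow$ (i) is indeed a tautological specialisation; (i) $\Rightarrow$ (iii) correctly reads off the vanishing of the relative homotopy sets of $(\scrC,\scrO)$ from the compression property for the polyhedral pairs $(D^n,S^{n-1})$, $n\geq 0$; and for (iii) $\Rightarrow$ (ii) the factorisation $\scrO\xrightarrow{j}E\xrightarrow{p}\scrC$ with $p$ an acyclic Serre fibration, the strict lift $\ell$, and the homotopy $F=p\circ H\circ(\ell\times\id)$ stationary on $Q$ do exactly what is needed, and you rightly observe that cofibrancy of $P$ and $Q$ is never used. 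The only point where your justification is compressed is the claim that the acyclic q-cofibration $j$ is the inclusion of a strong deformation retract ``because every space is q-fibrant'': to make this airtight, either run the path-object lifting argument (lift $E\to E\times E$, $e\mapsto(e,jr(e))$, against the fibration $E^{[0,1]}\to E\times E$ along the acyclic cofibration $j$, which produces a homotopy $\id_E\simeq jr$ that is constant on $j(\scrO)$), or note that the generating acyclic q-cofibrations $D^n\times\{0\}\hookrightarrow D^n\times[0,1]$ are acyclic h-cofibrations and that the class of acyclic h-cofibrations, i.e.\ of strong-deformation-retract inclusions among Hurewicz cofibrations, is closed under pushout, transfinite composition, and retract. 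Either repair is routine, so I would count your proof as complete and essentially the argument the cited reference carries out.
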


The nontrivial implication is (iii) $\Rightarrow$ (ii).  It is used to prove Proposition \ref{p:weak-result} below.  A special case of (ii) is when $P$ is a CW complex and $Q$ is a subcomplex of $P$.  This does not answer our question, as $\scrC$ carries a CW structure only in trivial cases: it is metrisable, but a metrisable CW complex is locally compact.

The following proposition provides a convenient reformulation of $\scrO$ being a deformation retract of $\scrC$.

\begin{proposition}  \label{p:reformulation}
For manifolds $S$ and $X$, the following are equivalent.
\begin{enumerate}
\item[(i)]  $(S,X)$ satisfies the parametric Oka property with respect to every topological space $P$ and every subspace of $P$.
\item[(ii)]  $\scrO(S,X)$ is a deformation retract of $\scrC(S,X)$.
\item[(iii)]  The inclusion $\scrO(S,X)\hookrightarrow\scrC(S,X)$ is an acyclic cofibration in the h-structure or, equivalently, in the m-structure.
\end{enumerate}
\end{proposition}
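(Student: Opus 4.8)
The plan is to prove the two equivalences (i)~$\Leftrightarrow$~(ii) and (ii)~$\Leftrightarrow$~(iii) separately. The first is a direct unwinding of the definitions, applied to a universal commuting square; the second rests on standard facts from the homotopy theory of cofibrations, together with the observation that the $h$- and $m$-structures share the same fibrations.

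For (i)~$\Rightarrow$~(ii), I would apply the parametric Oka property to the universal square, taking $P=\scrC$, $Q=\scrO$, and both horizontal maps to be identities. This produces a map $F:\scrC\times[0,1]\to\scrC$ with $F(\cdot,0)=\id_\scrC$, with $F(\cdot,t)$ fixing $\scrO$ pointwise for all $t$, and with $F(\scrC\times\{1\})\subset\scrO$. Corestricting $F(\cdot,1)$ yields a retraction $r:\scrC\to\scrO$, and $F$ is then a homotopy rel $\scrO$ from $\id_\scrC$ to the inclusion composed with $r$, so $\scrO$ is a deformation retract of $\scrC$. Conversely, for (ii)~$\Rightarrow$~(i), given a deformation retraction $H$ and an arbitrary square with lower map $f:P\to\scrC$ and upper map landing in $\scrO$, I would set $F(p,t)=H(f(p),t)$. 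The three required properties follow at once, the identity $F(\cdot,t)\vert Q=f\vert Q$ being exactly the statement that $H$ fixes $\scrO$ pointwise, since $f$ maps $Q$ into $\scrO$.

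For (ii)~$\Leftrightarrow$~(iii) in the $h$-structure, I would argue as follows. A deformation retraction exhibits its retraction as a homotopy inverse of the inclusion $i:\scrO\hookrightarrow\scrC$, so $i$ is a homotopy equivalence, hence an $h$-equivalence. To see that $i$ is also an $h$-cofibration, I would use that $\scrC$ is metrisable and $\scrO$ is closed: then $(\scrC,\scrO)$ is an NDR pair, with $u=\min(1,d(\cdot,\scrO))$ as the separating function and the deformation retraction itself as the required homotopy, so $i$ is a Hurewicz cofibration by Steenrod's criterion. Thus $i$ is an acyclic $h$-cofibration. Conversely, an acyclic $h$-cofibration is a Hurewicz cofibration that is a homotopy equivalence, and the classical theorem that the domain of such a map is a deformation retract of its codomain gives (ii).

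Finally, the equivalence of (iii) for the $h$- and $m$-structures is formal: in any model category the acyclic cofibrations are precisely the maps having the left lifting property against all fibrations, and the $h$- and $m$-structures have the same fibrations, namely those of Hurewicz, so they have the same acyclic cofibrations. The only step in which the special nature of $\scrC$ and $\scrO$ enters, rather than pure formalism, is the verification that the deformation retract is a cofibration, and I expect this to be the one point requiring genuine care: a deformation retract need not be a cofibration in general, so the metrisability of $\scrC$ and closedness of $\scrO$ are essential there.
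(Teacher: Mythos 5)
Your proposal is correct and follows essentially the same route as the paper: (i)$\Leftrightarrow$(ii) by unwinding definitions against the universal square, (ii)$\Rightarrow$(iii) via metrisability of $\scrC$ and closedness of $\scrO$ (where the paper cites Aguilar--Gitler--Prieto, you instead verify the NDR condition directly with $u=\min(1,d(\cdot,\scrO))$, which is a valid unpacking of that citation), and (iii)$\Rightarrow$(ii) by the classical theorem that an acyclic Hurewicz cofibration is the inclusion of a deformation retract. Your lifting-property argument for why the h- and m-structures share acyclic cofibrations is also correct and simply makes explicit what the paper leaves to Cole's general theory.
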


Note that (iii) is weaker than requiring the inclusion to be an acyclic cofibration in the q-structure.

\begin{proof}
(i) $\Leftrightarrow$ (ii) is obvious.

(ii) $\Rightarrow$ (iii):  Evidently, if $\scrO$ is a deformation retract of $\scrC$, then the inclusion $\scrO\hookrightarrow\scrC$ is a homotopy equivalence.  Since $\scrC$ is metrisable, the inclusion is also an h-cofibration \cite[Thm.~4.1.14]{Aguilar-et-al}.

(iii) $\Rightarrow$ (ii):  See \cite[Sec.~6.5]{May}.
\end{proof}

Our problem is thus reduced to finding a reasonable sufficient condition for the inclusion $\scrO\hookrightarrow\scrC$ to be an acyclic cofibration in the m-structure when $S$ is Stein and $X$ is Oka.  We know that $\scrO\hookrightarrow\scrC$ is a q-equivalence, so we need $\scrO\hookrightarrow\scrC$ to be an m-cofibration.  The following is our key observation.

\begin{proposition}  \label{p:key}
Let $S$ be a Stein manifold and $X$ be an Oka manifold.  If $\scrO(S,X)$ and $\scrC(S,X)$ are ANR, then $\scrO(S,X)$ is a deformation retract of $\scrC(S,X)$.
\end{proposition}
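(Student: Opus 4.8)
The plan is to deduce the statement from Proposition~\ref{p:reformulation} by showing that, under the stated hypotheses, the inclusion $\scrO\hookrightarrow\scrC$ is an acyclic cofibration in the h-structure; that is, that it is simultaneously a homotopy equivalence and an h-cofibration. Only two ingredients are needed: Oka theory, to pin down the weak homotopy type of the inclusion, and classical ANR theory, to promote weak statements to strong ones and to produce the cofibration.

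First I would observe that, since $S$ is Stein and $X$ is Oka, the pair $(S,X)$ satisfies the parametric Oka property with respect to finite polyhedra and subpolyhedra, this being one of the equivalent defining properties of an Oka manifold. Hence Theorem~\ref{t:my-old-result} applies and the inclusion $\scrO\hookrightarrow\scrC$ is a q-equivalence, i.e.\ a weak homotopy equivalence. This is the sole appearance of Oka theory in the argument.

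Next I would bring in the hypothesis that $\scrO$ and $\scrC$ are ANR. Since $\scrC$ is metrisable and $\scrO$ is a closed subspace of $\scrC$, both are metrisable ANR, and a metrisable ANR has the homotopy type of a CW complex; in the language introduced above, $\scrO$ and $\scrC$ are m-cofibrant. By Whitehead's theorem, a weak homotopy equivalence between spaces having the homotopy type of CW complexes is a homotopy equivalence, so the weak equivalence of the previous step is in fact a genuine homotopy equivalence. Separately, because $\scrO$ is a closed ANR subspace of the ANR $\scrC$, the pair $(\scrC,\scrO)$ enjoys the homotopy extension property, whence $\scrO\hookrightarrow\scrC$ is an h-cofibration. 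These two invocations of ANR theory are where the real work of the proof is concentrated.

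Finally, the inclusion $\scrO\hookrightarrow\scrC$ is now seen to be a homotopy equivalence and an h-cofibration, hence an acyclic cofibration in the h-structure, and Proposition~\ref{p:reformulation} yields at once that $\scrO$ is a deformation retract of $\scrC$. I expect the genuine obstacle to lie not in this short assembly but in the hypotheses themselves: the heart of the matter is to prove that $\scrO$ and $\scrC$ are ANR, and this is precisely what is deferred to Proposition~\ref{p:C-is-ANR} and Theorem~\ref{t:O-is-ANR}.
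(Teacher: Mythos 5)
Your proof is correct and follows essentially the same route as the paper: both arguments obtain the q-equivalence of $\scrO\hookrightarrow\scrC$ from Theorem~\ref{t:my-old-result}, obtain the h-cofibration from the fact that a closed ANR subspace of an ANR has the homotopy extension property, and conclude via Proposition~\ref{p:reformulation}(iii). The only cosmetic difference is that you upgrade the q-equivalence to an h-equivalence by Whitehead's theorem (using that ANRs are m-cofibrant) so as to land in the h-structure version of (iii), whereas the paper upgrades the h-cofibration to an m-cofibration via Cole's criterion and lands in the m-structure version --- the two packagings that (iii) already declares equivalent.
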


For the theory of ANR (absolute neighbourhood retracts for metric spaces), we refer the reader to \cite{Hu} and \cite{vanMill}.  See also \cite{Fritsch-Piccinini} and \cite{Weber}.  Let us mention that an ANR is locally contractible, meaning that every neighbourhood $U$ of each point contains a neighbourhood which is contractible in $U$.  In particular, an ANR is locally path connected.\footnote{Let us review a few facts about the relationship between the class of ANRs and the class of CW complexes.  They intersect in the class of locally finite ($=$ locally compact $=$ metrisable $=$ first countable) CW complexes.  A topological space has the homotopy type of an ANR if and only if it has the homotopy type of a CW complex (that is, is m-cofibrant).  A metrisable space is ANR if and only if every open subset has the homotopy type of a CW complex \cite{Cauty}.}

\begin{proof}
Since $\scrO$ and $\scrC$ are ANR, the inclusion $\scrO\hookrightarrow\scrC$ is an h-cofibration \cite[Thm.~4.2.15]{Aguilar-et-al}.  Since an ANR is m-cofibrant, $\scrO\hookrightarrow\scrC$ is in fact an m-cofibration \cite[Cor.~3.12]{Cole}.
\end{proof}

A good sufficient condition for $\scrC$ to be ANR is available.  Before stating it, let us note that if $\scrC$ is ANR, then for $\scrO$ to be a deformation retract of $\scrC$ is a \emph{local topological property} of $\scrO$.

\begin{corollary}  \label{c:equivalence}
Let $S$ be a Stein manifold and $X$ be an Oka manifold, such that $\scrC(S,X)$ is ANR.  Then $\scrO(S,X)$ is a deformation retract of $\scrC(S,X)$ if and only if $\scrO(S,X)$ is ANR.
\end{corollary}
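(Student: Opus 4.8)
The plan is to establish the two implications separately. The backward direction is an immediate application of Proposition~\ref{p:key}, while the forward direction rests on the permanence of the ANR property under retraction. No genuinely new argument is needed beyond assembling these two ingredients.

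First I would dispose of the backward implication. Suppose $\scrO(S,X)$ is ANR. Since $\scrC(S,X)$ is ANR by hypothesis, and since $S$ is Stein and $X$ is Oka, all the hypotheses of Proposition~\ref{p:key} are in force, so it applies verbatim and yields that $\scrO(S,X)$ is a deformation retract of $\scrC(S,X)$. This direction therefore requires nothing new.

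For the forward implication, suppose $\scrO(S,X)$ is a deformation retract of $\scrC(S,X)$. In particular it is a retract, so there is a retraction $r\colon\scrC(S,X)\to\scrO(S,X)$. The key step is to invoke the classical theorem that a retract of an ANR is again an ANR (see \cite{Hu} or \cite{vanMill}). Applied to the ANR $\scrC(S,X)$ and the retraction $r$, this gives at once that $\scrO(S,X)$ is ANR.

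The only point demanding care is that the retract theorem be applied in the metric setting in which ANR theory is developed here: it requires the ambient space to be metrisable, which holds for $\scrC(S,X)$ and is used elsewhere in the paper, and the retract to be a metrisable subspace. Since $\scrO(S,X)$ is a closed subspace of $\scrC(S,X)$, it inherits metrisability, so the hypotheses are met. Beyond this routine verification I do not expect any real obstacle: the corollary is essentially a repackaging of Proposition~\ref{p:key} together with the stability of the ANR property under passing to a retract.
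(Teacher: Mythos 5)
Your proposal is correct, and the backward direction (ANR $\Rightarrow$ deformation retract via Proposition~\ref{p:key}) is exactly the paper's. For the forward direction, however, you take a genuinely different, and more classical, route. The paper's entire proof is the single observation that, $\scrC$ being ANR, the inclusion $\scrO\hookrightarrow\scrC$ is an h-cofibration if and only if $\scrO$ is ANR (citing \cite{Aguilar-et-al}); combined with the fact, already used in Proposition~\ref{p:reformulation}, that the inclusion of a deformation retract of a metrisable space is an h-cofibration, this yields that $\scrO$ is ANR. You instead drop from ``deformation retract'' to ``retract'' and invoke the permanence theorem that a retract (indeed any neighbourhood retract) of an ANR is an ANR, as in \cite{Hu} and \cite{vanMill}. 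Both arguments are sound. Yours is more elementary in that it avoids cofibration language entirely for this direction and uses less of the hypothesis (a mere retraction suffices, so your argument actually proves a slightly stronger statement: if $\scrC$ is ANR and $\scrO$ is a retract of $\scrC$, then $\scrO$ is ANR); the paper's has the economy of disposing of both implications with one citation that meshes with the h-cofibration framework running through Propositions~\ref{p:reformulation} and~\ref{p:key}. Your side remark about metrisability is harmless but not really needed: every subspace of a metrisable space is metrisable, and a retract of a Hausdorff space is automatically closed, so the hypotheses of the permanence theorem are met without further ado.
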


\begin{proof}
If $\scrC$ is ANR, then $\scrO\hookrightarrow\scrC$ is an h-cofibration if and only if $\scrO$ is ANR \cite[Thm.~4.2.15]{Aguilar-et-al}.
\end{proof}

It is well known that the following are equivalent for a CW complex $S$.
\begin{enumerate}
\item[(i)]  $S$ has the homotopy type of a compact topological space.
\item[(ii)]  $S$ has the homotopy type of a compact metric space.
\item[(iii)]  $S$ is \textit{finitely dominated}, meaning that there is a finite CW complex $F$ and continuous maps $\phi:S\to F$ and $\psi:F\to S$ such that $\psi\circ\phi$ is homotopic to $\id_S$.  It follows that the homotopy groups of $S$ are finitely presented.
\item[(iv)]  The product of $S$ and the circle has the homotopy type of a finite CW complex.
\item[(v)]  $\id_S$ is homotopic to a map with relatively compact image.
\end{enumerate}
The implication (ii) $\Rightarrow$ (i) is trivial, (i) $\Rightarrow$ (v) $\Rightarrow$ (iii) are easy, but (iii) $\Rightarrow$ (ii) \cite{Ferry} and (iii) $\Leftrightarrow$ (iv) \cite{Mather} are nontrivial.

It is well understood when a finitely dominated CW complex $S$ has the homotopy type of a finite CW complex:  it does if and only if a K-theoretic obstruction due to Wall vanishes.  The obstruction vanishes when $S$ is simply connected, but not in general.  A finitely dominated CW complex has the homotopy type of a countable finite-dimensional CW complex, so it has the homotopy type of a Stein manifold.  Hence there are many finitely dominated Stein manifolds that do not have the homotopy type of a finite CW complex.  I do not know any explicit examples of such manifolds.

\begin{proposition}  \label{p:C-is-ANR}
Let $S$ be a finitely dominated countable CW complex and let $X$ be a locally finite countable CW complex.  Then $\scrC(S,X)$ is ANR.
\end{proposition}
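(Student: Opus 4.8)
The plan is to reduce the statement to two assertions: that $\scrC(S,X)$ is metrisable, and that every open subset of it has the homotopy type of a CW complex. By Cauty's theorem, quoted in the footnote above, these two together are equivalent to $\scrC(S,X)$ being an ANR.

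Metrisability is the easy part. A countable CW complex is hemicompact: its finite subcomplexes form a countable family of compact sets that is cofinal among all compact subsets, because every compact subset of a CW complex lies in a finite subcomplex. Since $X$ is metrisable, the compact-open topology on $\scrC(S,X)$ is therefore metrisable (and separable).

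For the rest I would work inside an ambient absolute retract. As a locally finite countable CW complex, $X$ is a separable, locally compact, completely metrisable ANR, so it embeds as a closed subset of a Fréchet space $E$, with a retraction $r\colon W\to X$ defined on an open neighbourhood $W\supset X$. Because $S$ is hemicompact and $E$ is Fréchet, $\scrC(S,E)$ is again a Fréchet space, hence an AR, and $\scrC(S,X)$ is a closed subspace of it. Finite domination now enters through the property recorded in Remark \ref{r:hypothesis}(b): $\id_S$ is homotopic, say by $G$, to a map $d\colon S\to S$ whose image has compact closure $C$. Precomposition $\tilde K_t(g)=g\circ G_t$ is then a deformation of the whole AR $\scrC(S,E)$ from the identity to $g\mapsto g\circ d$; it preserves $\scrC(S,X)$, and on the open set $V=\{g:g(C)\subset W\}$ the formula $R(g)=r\circ g\circ d$ defines a map $V\to\scrC(S,X)$ whose restriction to $\scrC(S,X)$ is $g\mapsto g\circ d$, which $\tilde K$ exhibits as homotopic to the identity.

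It remains to verify Cauty's condition. Given an open set $U\subset\scrC(S,X)$, the tube lemma shows that $U''=\{g\in U:\tilde K(\{g\}\times[0,1])\subset U\}$ is open, and I would use the globally defined ambient deformation $\tilde K$, together with $R$, to identify $U$ up to homotopy with an open subset of the AR $\scrC(S,E)$; such an open subset, lying in an ANR, has the homotopy type of a CW complex. The hard part will be precisely this identification. The obstruction is that $R$ restricts on $\scrC(S,X)$ not to the identity but only to $g\mapsto g\circ d$, and the discrepancy cannot be removed pointwise: in the compact-open topology a map may be arbitrarily close to a given one yet leave $W$ off the compact set $C$, so a naive retraction into $X$ is not available---already for $\scrC(\mathbb{R},S^1)$ the honest neighbourhood retraction is not obvious, even though the space is an ANR. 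This is where finite domination must be used in an essential, non-formal way: mere homotopy domination of $\scrC(S,X)$ by the ANR $\scrC(F,X)$ is insufficient, as the comb space shows, and the extra leverage comes from having the deformation $\tilde K$ defined on all of the absolute retract $\scrC(S,E)$, not only on $\scrC(S,X)$.
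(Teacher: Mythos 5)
Your proposal is incomplete at exactly the point that carries the weight of the theorem, and you say so yourself. The parts you do establish are correct: metrisability of $\scrC(S,X)$ via hemicompactness of the countable complex $S$; and the domination of $\scrC(S,X)$ by the open subset $V=\{g: g(\overline{C})\subset W\}$ of the Fr\'echet space $\scrC(S,E)$, using $R(g)=r\circ g\circ d$ and the precomposition homotopy $\tilde K$. That domination already proves that $\scrC(S,X)$ has the homotopy type of a CW complex, and does so by a route genuinely different from the paper's (the paper gets this global statement from Ferry's theorem --- a finitely dominated CW complex is homotopy equivalent to a compact metric space $M$ --- followed by Milnor's theorem applied to $\scrC(M,X)$). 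But CW homotopy type of the whole space is only half of Cauty's criterion; the other half is that \emph{every open subset} $U$ of $\scrC(S,X)$ has CW homotopy type, and this is the step you label ``the hard part'' and do not carry out. What your construction actually yields for a given open $U$ is weaker: the set $U''=\{g\in U:\tilde K(\{g\}\times[0,1])\subset U\}$ is an open subset of $U$ contained in $R^{-1}(U)\cap V$, and the composite $U''\hookrightarrow R^{-1}(U)\cap V\xrightarrow{\,R\,} U$ is homotopic in $U$ to the inclusion $U''\hookrightarrow U$. So the inclusion of a \emph{smaller} open set into $U$ factors, up to homotopy, through an open subset of an AR; there is no map $U\to R^{-1}(U)\cap V$ in sight (membership $g\in U$ does not give $g\circ d\in U$), hence no domination of $U$ itself, and no conclusion about the homotopy type of $U$. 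Passing from this ``pro''-style local statement to Cauty's condition is not a formal manipulation.

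The published argument avoids this difficulty entirely by quoting Smrekar--Yamashita \cite[Thm.~1.1]{Smrekar-Yamashita}: for $S$ a countable CW complex and $X$ an ANR, the function space $\scrC(S,X)$ is an ANR as soon as it has CW homotopy type. That theorem is precisely the local-to-global work your sketch would have to reproduce, and its proof is substantial. So either you invoke that result --- in which case your ambient-Fr\'echet-space construction is a clean, self-contained replacement for the Ferry--Milnor step and the proof closes --- or you must actually supply the argument that arbitrary open subsets of $\scrC(S,X)$ have CW homotopy type, which your current setup does not do. As written, the proof has a genuine gap.
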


In particular, $\scrC(S,X)$ is ANR if $S$ and $X$ are manifolds and $S$ is finitely dominated.  

\begin{proof}
By assumption, $S$ is homotopy equivalent to a compact metric space $M$.  Now $\scrC(M,X)$ has the homotopy type of a countable CW complex \cite[Cor.~2]{Milnor}.  Since $\scrC(S,X)$ and $\scrC(M,X)$ are homotopy equivalent, $\scrC(S,X)$ has the homotopy type of a CW complex.  Being a locally finite CW complex, $X$ is ANR.  By \cite[Thm.~1.1]{Smrekar-Yamashita}, it follows that $\scrC(S,X)$ is ANR.
\end{proof}

Under very mild additional assumptions, it even follows from \cite[Thm.~1.2]{Smrekar-Yamashita} that $\scrC(S,X)$ is an $\ell_2$-manifold and hence homeomorphic to an open subset of $\ell_2$.

We can now prove that if a Stein manifold $S$ is merely finitely dominated, then every continuous map $f$ from $S$ to an Oka manifold can be deformed to a holomorphic map in a way that depends continuously on $f$, but with no guarantee that $f$ will be left fixed if it is holomorphic.  In other words, the inclusion $\scrO\hookrightarrow\scrC$ has a left homotopy inverse.

\begin{proposition}  \label{p:weak-result}
Let $S$ be a finitely dominated Stein manifold and let $X$ be an Oka manifold.  Then $\id_{\scrC(S,X)}$ can be deformed to a map with image in $\scrO(S,X)$.
\end{proposition}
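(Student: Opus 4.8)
The plan is to transport the problem to a CW model of $\scrC$ and there invoke the parametric Oka property. First I would use the fact, established in the proof of Proposition \ref{p:C-is-ANR}, that since $S$ is finitely dominated the space $\scrC$ has the homotopy type of a CW complex. So I fix a CW complex $P$ together with a homotopy equivalence $h\colon P\to\scrC$ and a homotopy inverse $g\colon\scrC\to P$, so that $h\circ g\simeq\id_{\scrC}$.

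Next I would invoke the parametric Oka property. Since $X$ is Oka, the pair $(S,X)$ satisfies condition (i) of Theorem \ref{t:my-old-result}, hence also condition (ii), the nontrivial step being the implication (iii) $\Rightarrow$ (ii). Taking the empty parameter subspace, the inclusion $\emptyset\hookrightarrow P$ is a q-cofibration between q-cofibrant spaces, and the square
\[ \xymatrix{ \emptyset \ar[r] \ar@{^{(}->}[d] & \scrO \ar@{^{(}->}[d] \\ P \ar[r]^{h} & \scrC } \]
commutes trivially, so condition (ii) supplies a homotopy $F\colon P\times[0,1]\to\scrC$ with $F(\cdot,0)=h$ and $F(P\times\{1\})\subset\scrO$. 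Setting $h'=F(\cdot,1)$, I obtain a map $h'\colon P\to\scrC$ with $h'\simeq h$ that factors through $\scrO$.

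Finally I would assemble the pieces. The composite $h'\circ g\colon\scrC\to\scrO\hookrightarrow\scrC$ has image in $\scrO$, and
\[ \id_{\scrC}\simeq h\circ g\simeq h'\circ g, \]
where the first homotopy holds because $g$ is a homotopy inverse of $h$ and the second because $h\simeq h'$. Hence $\id_{\scrC}$ is homotopic to a map with image in $\scrO$, which is exactly the assertion.

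I do not anticipate a genuine obstacle, because both hard ingredients are already in hand: finite domination of $S$ yields, through Proposition \ref{p:C-is-ANR}, a CW model $P$ of $\scrC$, while Theorem \ref{t:my-old-result} furnishes the parametric Oka property for q-cofibrant parameter spaces. The one point demanding care is that this parametric Oka property applies to the CW complex $P$ and not to $\scrC$ itself, which carries no CW structure in nontrivial cases; this is precisely why the detour through $P$ is necessary. It is also why the argument produces only a left homotopy inverse of $\scrO\hookrightarrow\scrC$ rather than a deformation retraction, since the homotopy $F$ carries no obligation to leave $\scrO$ fixed.
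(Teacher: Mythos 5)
Your proposal is correct and follows essentially the same route as the paper: obtain a CW model $P$ of $\scrC$ via the proof of Proposition \ref{p:C-is-ANR}, deform the homotopy equivalence $P\to\scrC$ into $\scrO$ using Theorem \ref{t:my-old-result}, and compose with the homotopy inverse. The only difference is that you spell out the application of condition (ii) with $Q=\emptyset$, which the paper leaves implicit.
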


\begin{proof}
By the proof of Proposition \ref{p:C-is-ANR}, $\scrC$ has the homotopy type of a CW complex $P$.  Let $\alpha:P\to\scrC$ be a homotopy equivalence with a homotopy inverse $\beta$.  By Theorem \ref{t:my-old-result}, $(S,X)$ has the parametric Oka property with respect to $P$, so $\alpha$ can be deformed to a map $\alpha'$ taking $P$ into $\scrO$.  Then $\id_{\scrC}$ is homotopic to $\alpha'\circ\beta$.
\end{proof}

Next we derive a sufficient condition for spaces of holomorphic maps to be ANR.

\begin{theorem}  \label{t:O-is-ANR}
Let $X$ be an Oka manifold and let $S$ be a Stein manifold with a strictly plurisubharmonic Morse exhaustion with finitely many critical points.  Then $\scrO(S,X)$ is ANR.
\end{theorem}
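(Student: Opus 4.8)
The plan is to prove that the separable metrizable space $\scrO=\scrO(S,X)$ --- closed in the metrizable space $\scrC=\scrC(S,X)$ --- is an ANR by exhibiting it as a well-behaved inverse limit and feeding in Oka theory \emph{with approximation}. First I would fix a strictly plurisubharmonic Morse exhaustion of $S$ and choose an increasing sequence of noncritical sublevel sets $\Omega_1\Subset\Omega_2\Subset\cdots$ with $\bigcup_n\Omega_n=S$ and each $\overline{\Omega_n}$ holomorphically convex. Since every compact subset of $S$ lies in some $\Omega_n$ and compatible holomorphic maps glue, restriction identifies $\scrO(S,X)$ with the inverse limit $\varprojlim_n\scrO(\Omega_n,X)$ (compact-open topology throughout), and likewise $\scrC(S,X)=\varprojlim_n\scrC(\Omega_n,X)$, compatibly with the inclusions $\scrO(\Omega_n,X)\hookrightarrow\scrC(\Omega_n,X)$.

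The engine of the proof is the parametric Oka property with approximation with respect to finite polyhedra --- the only property of $X$ I would use. Applied to the Stein manifolds $\Omega_n$ and to the holomorphically convex compacts $K_n\subseteq L_n^\circ$ and the retracts $L_n$ furnished by Remark \ref{r:hypothesis}(b), it yields two things at once: the levelwise inclusions $\scrO(\Omega_n,X)\hookrightarrow\scrC(\Omega_n,X)$ are q-equivalences (Theorem \ref{t:my-old-result}), and, crucially, a family of maps parametrized by a polyhedron that is already holomorphic over a subpolyhedron can be deformed to a holomorphic family while staying uniformly close to the original on $K_n$. The approximation is precisely what will let me pass from the homotopy-theoretic Oka principle to the metric, uniform-local control that ANR theory demands.

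Key steps, in order. (1) Reduce ``ANR'' to ``local ANR'': since $\scrO$ is metrizable, by Hanner's theorem it suffices to give each $f_0\in\scrO$ an ANR neighbourhood; equivalently, as $\scrC$ is already an ANR (Proposition \ref{p:C-is-ANR}, using that $S$ is finitely dominated), it suffices to show $\scrO$ is a neighbourhood retract of $\scrC$. (2) Because there are only finitely many critical points, $\Omega_n\hookrightarrow S$ is a homotopy equivalence for all large $n$ --- equivalently $S$ deformation-retracts onto the compact $L_n$ --- so the inverse system stabilizes in homotopy type, while Oka--Weil approximation on the holomorphically convex $\overline{\Omega_n}$ makes the bonding restriction maps have dense image. (3) The heart: for a basic compact-open neighbourhood of $f_0$ cut out by a compact $K$ and a tolerance, use the parametric Oka property with approximation over finite polyhedra --- applied on a Stein neighbourhood of a holomorphically convex $K_n\supseteq K$ --- to build a contraction of a smaller neighbourhood to $f_0$ that stays inside $\scrO$ at every stage and is jointly continuous. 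Polyhedral approximation of the parameter (legitimate because open subsets of the ANR $\scrC$ have the homotopy type of CW complexes) lets me invoke the polyhedral form of the Oka property, and the estimates on $K_n$ together with the retraction onto $L_n$ control behaviour on the ever-larger compacts that define the topology. (4) Assemble these local contractions, through the inverse-limit presentation, into the local ANR structure (equivalently, the neighbourhood retraction of $\scrO$ in $\scrC$), and conclude by the cited ANR criteria; the route through Cauty's theorem --- every open subset of $\scrO$ has CW homotopy type --- should serve as a cross-check.

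The main obstacle is step (3): converting the Oka principle, a soft statement about deforming maps up to homotopy, into the rigid, quantitative statement ANR-hood requires, namely a contraction that remains holomorphic throughout and depends continuously on all parameters simultaneously. The tension is genuine --- the compact-open topology on $\scrO$ is governed by behaviour on arbitrarily large compacts, whereas the identity theorem makes holomorphic maps rigid, so approximation controlled only on a fixed holomorphically convex $K_n$ must still be shown to deliver uniform local contractibility with estimates across the whole exhaustion. Making the parametric approximation on $K_n$ and the topological retraction onto $L_n$ cooperate to produce honest local contractions of $\scrO$ is where essentially all the work lies; the surrounding reductions and the inverse-limit bookkeeping are routine by comparison.
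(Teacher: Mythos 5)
There is a genuine gap at the centre of your outline. Steps (1) and (4) reduce the problem to producing, for each $f_0\in\scrO$, arbitrarily small neighbourhoods that contract to $f_0$ inside a prescribed larger neighbourhood; but that is only local contractibility, and for an infinite-dimensional metric space such as $\scrO$ local contractibility does \emph{not} imply the ANR property. One needs a criterion that controls extensions in all dimensions simultaneously from a single refinement of covers. The paper uses the Dugundji--Lefschetz characterisation: for every open cover $\mathscr U$ of $\scrO$ there is a refinement $\mathscr V$ such that every partial realisation of a simplicial pair $(P,Q)$ that is fine with respect to $\mathscr V$ extends to a full realisation fine with respect to $\mathscr U$. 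This is where the infinite nested chain $K\subset\cdots\subset K_1\subset L_1\subset K_0\subset L_0$ earns its keep: one extends skeleton by skeleton, with the $n$-skeleton mapped into the neighbourhood $V_n$ of maps whose graph over $L_n$ lies in a tubular neighbourhood $W$ of the graph of $f$, consuming one layer of the chain per dimension while all the $V_n$ remain inside the given $U\in\mathscr U$. Nothing in your outline performs this dimension-by-dimension bookkeeping, and without it the ``assembly'' in step (4) has no theorem to appeal to.

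The second missing device is the one that makes each single extension step possible: a holomorphic vector bundle $E$ over $S$ realising a neighbourhood $W$ of the graph of $f$ with fibrewise convex slices. This (a) makes the space of maps $L_n\to X$ with graph in $W$, holomorphic on $L_n^\circ$, a convex subset of a Banach space, hence contractible, so that $\phi_n\vert\partial\sigma$ extends over the simplex $\sigma$ at all; and (b) together with Str\o m's theorem on $S\times\partial\sigma\cup L_n\times\sigma\hookrightarrow S\times\sigma$ produces a continuous family on $S\times\sigma$, holomorphic near the holomorphically convex $K_n$, to which the parametric Oka property with approximation can legitimately be applied --- the parameter space being the \emph{finite polyhedron} $\sigma$ with subspace $\partial\sigma$. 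Your substitute, ``polyhedral approximation of the parameter'' via the CW homotopy type of open subsets of $\scrC$, does not work: replacing a neighbourhood of $f_0$ by a homotopy-equivalent CW complex destroys exactly the pointwise control (fixing maps that are already holomorphic, landing in a prescribed small neighbourhood) that the ANR criterion requires, and the Oka property is in any case only available for finite polyhedra or compact parameter spaces, never for the neighbourhood itself. Finally, the inverse-limit presentation $\scrO=\varprojlim\scrO(\Omega_n,X)$ plays no effective role: inverse limits of ANRs need not be ANRs, and dense images of bonding maps do not repair this.
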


\begin{proof}
We shall verify the Dugundji-Lefschetz characterisation of the ANR property for $\scrO$ \cite[Thm.~IV.4.1]{Hu}, \cite[Thm.~5.2.1]{vanMill}.  (As far as I can see, this is the only feasible approach to the problem.)  Let $\mathscr U$ be an open cover of $\scrO$.  We need to produce a refinement $\mathscr V$ of $\mathscr U$ such that if $P$ is a simplicial complex\footnote{\cite[Thm.~5.2.1]{vanMill} is stated for simplicial complexes that are countable and locally finite, but these properties are irrelevant here.} with a subcomplex $Q$ containing all the vertices of $P$, then every continuous map $\phi_0:Q\to\scrO$ such that for each simplex $\sigma$ of $P$, $\phi_0(\sigma\cap Q)\subset V$ for some $V\in\mathscr V$, extends to a continuous map $\phi:P\to\scrO$ such that for each simplex $\sigma$ of $P$, $\phi(\sigma)\subset U$ for some $U\in\mathscr U$.

Let $f\in\scrO$.  Choose $U\in\mathscr U$ with $f\in U$.  Standard methods show that there is a holomorphic vector bundle $E$ over $S$ such that a neighbourhood $W$ of the graph of $f$ in $S\times X$ is biholomorphic over $S$ to a neighbourhood  of the zero section in $E$, with the graph of $f$ corresponding to the zero section.  (This requires the Stein property of $S$ but not the Oka property of $X$.)  By shrinking the latter neighbourhood, we may assume that its intersection with each fibre of $E$ is convex.  By shrinking the former neighbourhood, we may assume that there is a compact set $K\subset S$ such that if the graph of $g\in\scrO$ over $K$ lies in $W$, then $g\in U$.

Find compact subsets $K\subset\cdots\subset K_1\subset L_1\subset K_0\subset L_0$ of $S$ such that for every $n\geq 0$, $K_n\subset L_n^\circ$, $K_n$ is holomorphically convex, and $S$ deformation-retracts onto $L_n$ (see Remark \ref{r:hypothesis}).  Let $V_n$ be the neighbourhood of $f$ consisting of all maps in $\scrO$ whose graph over $L_n$ lies in $W$.  Let the refinement $\mathscr V$ of $\mathscr U$ consist of all such open sets $V_0$, one for each $f\in\scrO$.

Let $P$, $Q$, and $\phi_0$ be as above, so for each simplex $\sigma$ of $P$, $\phi_0(\sigma\cap Q)\subset V_0^\sigma$ for some $V_0^\sigma\in\mathscr V$ associated to some $f\in\scrO$ as above.  Adorn the other associated sets $W$, $K_n$, $L_n$, and $V_n$, and the associated bundle $E$ with a superscript $\sigma$ as well.  Let $P_n=P^n\cup Q$, where $P^n$ is the $n$-skeleton of $P$.  We shall construct continuous maps $\phi_n:P_n\to\scrO$, $n\geq 1$, such that $\phi_n\vert P_{n-1}=\phi_{n-1}$ and for every simplex $\sigma$ of $P$, $\phi_n(\sigma\cap P_n)\subset V_n^\sigma$.  The map $\phi:P\to\scrO$ with $\phi\vert P_n=\phi_n$ for each $n\geq 0$ will then be continuous with respect to the Whitehead topology on $P$, and for each simplex $\sigma$ of $P$, $\phi(\sigma)\subset U$ for some $U\in\mathscr U$.

Suppose $\phi_n$, $n\geq 0$, is given.  Let $\sigma$ be an $(n+1)$-simplex of $P$ but not of $Q$.  The interior $\sigma\setminus\partial\sigma$ of $\sigma$ does not intersect $P_n$.  Note also that the interiors of distinct $(n+1)$-simplices do not intersect.  We need to suitably extend $\phi_n\vert\partial\sigma$ to $\sigma$.  We have $\phi_n(\partial\sigma)\subset \phi_n(\sigma\cap P_n)\subset V_n^\sigma$.  Let $C$ be the set of maps with graph in $W^\sigma$ in the space of continuous maps $L_n^\sigma\to X$ that are holomorphic on $(L_n^\sigma)^\circ$.  Then $C$ is homeomorphic to a convex subset of the Banach space of continuous sections of $E^\sigma$ over $L_n^\sigma$, so $C$ is contractible.  Hence the composition of $\phi_n:\partial\sigma\to V_n^\sigma$ and the restriction map $V_n^\sigma \to C$ extends by the cofibration $\partial\sigma\hookrightarrow\sigma$ to a continuous map $\alpha:\sigma\to C$.

We may view $\alpha$ as a continuous map $S\times\partial\sigma \cup L_n^\sigma\times\sigma \to X$.  Now $L_n^\sigma$ is a deformation retract of $S$, so $S\times\partial\sigma \cup L_n^\sigma\times\sigma$ is a deformation retract of $S\times\sigma$ \cite[Thm.~6]{Strom}.  Thus $\alpha$ extends to a continuous map $\alpha:S\times\sigma\to X$.  Note that $\alpha(\cdot,t)$ is holomorphic on $S$ for all $t\in\partial\sigma$, and $\alpha(\cdot,t)$ is holomorphic on the neighbourhood $(L_n^\sigma)^\circ$ of $K_n^\sigma$ for all $t\in\sigma$.  

The parametric Oka property with approximation with respect to finite polyhedra, one of the equivalent formulations of the Oka property of $X$ \cite[Thm.~5.4.4, Sec.~5.15]{Forstneric2011}, now implies that $\alpha$ may be deformed to a continuous map $\beta:S\times\sigma\to X$ such that $\beta(\cdot,t)=\alpha(\cdot,t)$ for all $t\in\partial\sigma$, $\beta(\cdot,t)$ is holomorphic on $S$ for all $t\in\sigma$, and $\beta$ uniformly approximates $\alpha$ as closely as desired on $K_n^\sigma\times\sigma$.  If the approximation is close enough, then defining $\phi_{n+1}(t)=\beta(\cdot,t)$ for $t\in\sigma$ gives a continuous extension of $\phi_n\vert\partial\sigma$ to $\sigma$ with $\phi_{n+1}(\sigma)\subset V_{n+1}^\sigma$.
\end{proof}

Theorem \ref{t:main-result} now follows from Propositions \ref{p:key} and \ref{p:C-is-ANR} and Theorem \ref{t:O-is-ANR}.  Without any difficult arguments in Oka theory, using mainly topology, we have shown that for many Stein sources, including all affine algebraic ones, the parametric Oka property with approximation with respect to finite polyhedra implies the parametric Oka property (without approximation) with respect to completely arbitrary parameter spaces.

\begin{remark}  \label{r:ell-2-manifold}
The prototypical Oka manifold is a complex Lie group $G$.  If $\scrO(S,G)$ is ANR, then it is an $\ell_2$-manifold and hence homeomorphic to an open subset of $\ell_2$ (except in the trivial case when it is a Lie group) \cite[Cor.~1]{Dobrowolski-Torunczyk}.  I do not know whether this holds for Oka manifolds in general.
\end{remark}

If $S$ is not finitely dominated, ANR theory does not apply.  For example, $\scrO(\C\setminus\mathbb N, \C^*)$ and $\scrC(\C\setminus\mathbb N, \C^*)$ are not semilocally contractible, so they are not m-cofibrant, let alone ANR.  In this very particular case, though, we can show that $\scrO$ is a deformation retract of $\scrC$.  We conclude the paper by explaining this.

Let $S$ be a Stein manifold.  Being Fr\'echet spaces, $\scrC(S,\C^n)$ and $\scrO(S,\C^n)$ are ANR, so $\scrO(S,\C^n)$ is a deformation retract of $\scrC(S,\C^n)$.  Let $\scrC_0(S,\C^*)$ denote the component---connected component or path component: they are the same---of $\scrC(S,\C^*)$ consisting of the nullhomotopic maps.  If $S$ is simply connected, then $\scrC_0(S,\C^*)=\scrC(S,\C^*)$.  Define $\scrO_0(S,\C^*)$ similarly.  The universal covering space of $\scrC_0(S,\C^*)$ is $\scrC(S,\C)$, so the two are locally homeomorphic.  It follows that $\scrC_0(S,\C^*)$ is ANR.  So is $\scrO_0(S,\C^*)$.  Hence, $\scrO_0(S,\C^*)$ is a deformation retract of $\scrC_0(S,\C^*)$.  The same holds for every other component of $\scrC(S,\C^*)$ and the unique component of $\scrO(S,\C^*)$ that it contains.  More generally, this argument proves the following result.

\begin{proposition}  \label{p:quotients-of-affine-spaces}
Let $X$ be the quotient of $\C^n$ by a discrete subgroup.  If $S$ is a Stein manifold, then $(S,X)$ satisfies the parametric Oka property with respect to every connected topological space $P$ and every subspace of $P$.  If $S$ is simply connected, then $(S,X)$ satisfies the parametric Oka property with respect to every topological space $P$ and every subspace of $P$.
\end{proposition}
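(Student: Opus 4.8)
The plan is to follow and generalise the argument sketched above for $\scrC(S,\C^*)$, reducing everything to two facts: that each path component of $\scrC=\scrC(S,X)$ and of $\scrO=\scrO(S,X)$ is an ANR, and that the inclusion $\scrO\hookrightarrow\scrC$ is a q-equivalence. Write $\pi\colon\C^n\to X$ for the quotient map; it is the universal covering of $X$, with deck group the discrete subgroup $\Gamma$, and $X$, being a complex Lie group, is Oka, so Theorem \ref{t:my-old-result} applies. Note that $\scrC$ and $\scrO$ are abelian topological groups under pointwise operations, with identity the constant map to $\pi(0)$.

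First I would identify the identity path component. Postcomposition with $\pi$ gives $\pi_*\colon\scrC(S,\C^n)\to\scrC$. Since $\scrC(S,\C^n)$ is path connected, its image is exactly the path component $\scrC_0$ of the identity: a path joining a lift to a constant pushes down to a path joining the given map to a constant, and conversely every map in $\scrC_0$ is liftable by the path lifting property of the covering $\pi$. The fibres of $\pi_*$ are $\Gamma$-torsors, hence discrete, and $\pi_*$ restricts to the universal covering of $\scrC_0$ (the total space being a contractible Fr\'echet space); in particular $\scrC_0$ is locally homeomorphic to $\scrC(S,\C^n)$. As being an ANR is a local property of metrisable spaces, $\scrC_0$ is ANR. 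Running the same argument with $\scrO(S,\C^n)$ in place of $\scrC(S,\C^n)$ shows that the identity component $\scrO_0$ of $\scrO$ is ANR.

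Next I would pass to all components. In the topological group $\scrC$ the connected component of the identity is a closed subgroup whose cosets are precisely the connected components, and translation by any $g\in\scrC$ is a homeomorphism carrying $\scrC_0$ onto the component of $g$. Hence every component of $\scrC$ is homeomorphic to $\scrC_0$ and is therefore ANR; likewise every component of $\scrO$ is ANR. In particular each component is locally path connected, so connected components and path components coincide. By Theorem \ref{t:my-old-result} the inclusion $\scrO\hookrightarrow\scrC$ is a q-equivalence, so it induces a bijection $\pi_0(\scrO)\to\pi_0(\scrC)$; thus each component $\scrC_i$ of $\scrC$ meets $\scrO$ in a single component $\scrO_i$ of $\scrO$, and the restriction $\scrO_i\hookrightarrow\scrC_i$ is again a q-equivalence. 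Since $\scrO_i$ and $\scrC_i$ are ANR, the argument of Proposition \ref{p:key} applies verbatim to the pair $(\scrO_i,\scrC_i)$ and shows that $\scrO_i$ is a deformation retract of $\scrC_i$.

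Finally I would assemble the two assertions. For connected $P$ and a commuting square with $Q\hookrightarrow P$, the image of $f\colon P\to\scrC$ is connected and hence lies in a single path component $\scrC_i$, while $Q$ maps into $\scrO\cap\scrC_i=\scrO_i$; composing $f$ with a deformation retraction of $\scrC_i$ onto $\scrO_i$ yields the required $F$, keeping $f\vert Q$ fixed because the retraction fixes $\scrO_i$ pointwise. This gives the parametric Oka property for every connected $P$. When $S$ is simply connected, every map $S\to X$ lifts through $\pi$, so $\scrC=\scrC_0$ is connected; then $\scrO$ is a deformation retract of $\scrC$ outright, and Proposition \ref{p:reformulation} upgrades this to the parametric Oka property with respect to arbitrary $P$ and subspaces. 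The main obstacle is the covering-space step of the second paragraph: because $S$ may be noncompact (for instance $S=\C\setminus\mathbb N$), liftability is not locally constant in the compact-open topology and the total space $\scrC$ need not be an ANR, so the covering structure must be installed on the path component $\scrC_0$ itself rather than on an open subset of $\scrC$, and it is exactly this that renders the individual components ANR even though the whole space is not.
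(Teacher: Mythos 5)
Your proposal follows the paper's own route almost exactly: the paper likewise obtains the ANR property of each component of $\scrC$ and $\scrO$ from the local homeomorphism with the Fr\'echet spaces $\scrC(S,\C^n)$ and $\scrO(S,\C^n)$ furnished by the covering, then applies the ANR/h-cofibration/m-cofibration machinery of Proposition \ref{p:key} component by component, and finally observes that a connected parameter space maps into a single component. Your explicit use of the group structure of $X$ to translate the identity component onto the others, and your explicit appeal to the $\pi_0$-bijection coming from Theorem \ref{t:my-old-result}, only make precise what the paper leaves implicit.

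There is, however, one step whose justification is wrong as written: ``each component is locally path connected, so connected components and path components coincide.'' Local path connectedness of each path component in its subspace topology does not imply that path components and connected components agree: in the topologist's sine curve both path components are locally path connected, yet the space is connected. The coincidence is genuinely needed for the first assertion, since a connected $P$ only guarantees that $f(P)$ lies in a single \emph{connected} component, while your deformation retractions live on the \emph{path} components (which, as you note for $S=\C\setminus\mathbb N$, are not open). The fact is true, but for a different reason: for each loop $\lambda$ in $S$ the assignment $g\mapsto[g\circ\lambda]\in\pi_1(X)\cong\Gamma$ is locally constant because $S^1$ is compact, hence continuous into a discrete group; these maps jointly separate the path components of $\scrC$ (two maps inducing the same monodromy on every loop differ, in the group $\scrC$, by a map that lifts to $\C^n$ and is therefore joined to the identity by a path), so every connected subset of $\scrC$ lies in a single path component. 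This is exactly the fact the paper asserts parenthetically (``connected component or path component: they are the same''); with that repair your argument is complete.
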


We now restrict ourselves to the case when $S=\C\setminus\mathbb N$ and $X=\C^*$.  The bijection $\scrO/\scrO_0\to\mathbb Z^\mathbb N$ induced by the divisor map $\scrO\to\mathbb Z^\mathbb N$ taking a map to its sequence of winding numbers about each puncture of $\C\setminus\mathbb N$ is a homeomorphism with respect to the quotient topology on the source and the product topology on the target.  So is the analogous map $\scrC/\scrC_0\to\mathbb Z^\mathbb N$.  As observed above, $\scrO_0$ is a deformation retract of $\scrC_0$.  It follows that $\scrO$ is a deformation retract of $\scrC$ if we can show that $\scrO$ is homeomorphic to $\scrO_0\times\mathbb Z^\mathbb N$ over $\mathbb Z^\mathbb N$, and similarly for $\scrC$.  This, in turn, follows if we can produce a continuous section of the projection $\scrO\to\mathbb Z^\mathbb N$.

The values of our section will be Weierstrass products.  The theory of Weierstrass products is of course classical and well known, but since I have never seen the continuity property needed here explicitly stated, I shall give some details.  Recall the Weierstrass factors
\[ E_0(z)=1-z, \qquad E_m(z)=(1-z)\exp\sum_{j=1}^m \frac{z^j}j, \quad m\in\mathbb N, \]
and the basic estimate $\lvert E_m(z)-1\rvert\leq\lvert z\rvert^{m+1}$ if $\lvert z\rvert\leq 1$.  For each $\nu\in\mathbb Z^\mathbb N$ and $k\in\mathbb N$, let $m_k^\nu$ be the sum of $k$ and the smallest nonnegative integer no smaller than $\log\lvert\nu(k)\rvert$.  For $k\geq e\lvert z\rvert$,
\[ \left(\frac{\lvert z\rvert} k\right)^{m_k^\nu}\frac{\lvert\nu(k)\rvert}k \leq e^{-m_k^\nu}\frac{\lvert\nu(k)\vert}k \leq \frac 1{ke^k}, \]
so the series $\sum\limits_{k=1}^\infty \left(\dfrac z k\right)^{m_k^\nu}\dfrac{\nu(k)}k$ is absolutely locally uniformly convergent in $\C$.  Since
\[ \frac{dE_m^n(z/k)/dz}{E_m^n(z/k)}=\left(\frac z k\right)^m\frac n{z-k}, \]
it follows that the product
\[ f_\nu(z)=\prod_{k=1}^\infty E_{m_k^\nu}^{\nu(k)}(z/k) \]
converges absolutely locally uniformly in $\C$ to a meromorphic function with divisor $\nu$.  Hence $\nu\mapsto f_\nu$ is a section of the projection $\scrO\to\mathbb Z^\mathbb N$.  

We claim that the section is continuous.  Let $\nu\in\mathbb Z^\mathbb N$, let $K$ be a compact subset of $\C\setminus\mathbb N$, and let $\epsilon>0$.  We need to show that there is $k_0\in\mathbb N$ such that if $\mu\in\mathbb Z^\mathbb N$ and $\mu(k)=\nu(k)$ for all $k<k_0$, then $\lVert f_\mu - f_\nu\rVert_K < \epsilon$.  Here, $\lVert\cdot\rVert$ denotes the supremum norm.

Find $c>0$ such that $\left\lVert \prod\limits_{k=1}^n E_{m_k^\nu}^{\nu(k)}(\cdot/k)\right\rVert_K<c$ for all $n\in\mathbb N$.  Choose $k_0$ such that $k_0\geq e\lvert z\rvert$ for all $z\in K$.  Let $\lambda\in\mathbb Z^\mathbb N$, $k\geq k_0$, and $z\in K$.  Then $\lvert E_{m_k^\lambda}(z/k)-1\rvert\leq e^{-2}$ by the basic estimate.  Let $\log E_{m_k^\lambda}(z/k)$ be the principal logarithm.  Now $\lvert\log(1+w)\rvert\leq\tfrac 3 2\lvert w\rvert$ if $\lvert w\rvert\leq\tfrac 1 2$, so by the basic estimate,
\[ \lvert\log E_{m_k^\lambda}(z/k)\rvert\leq\tfrac 3 2\lvert E_{m_k^\lambda}(z/k)-1\rvert\leq \tfrac 3 2\lvert z/k\rvert^{m_k^\lambda +1}\leq \tfrac 3 2 e^{-(m_k^\lambda +1)}, \]
and
\[ \sum_{k=k_0}^\infty\lvert\lambda(k)\log E_{m_k^\lambda}(z/k)\rvert \leq \tfrac 3 2 \sum_{k=k_0}^\infty \lvert\lambda(k)\rvert e^{-(m_k^\lambda +1)} \leq \tfrac 3 2\sum_{k=k_0}^\infty e^{-(k+1)}.\]
The sum on the right can be made arbitrarily small by taking $k_0$ sufficiently large.  Hence, after replacing $k_0$ by a larger number if necessary, we have $\lVert f_\lambda-1\rVert_K<\epsilon/(2c)$ for all $\lambda\in\mathbb Z^\mathbb N$ with $\lambda(k)=0$ for $k<k_0$.

Finally, say $\mu\in\mathbb Z^\mathbb N$ with $\mu(k)=\nu(k)$ for all $k<k_0$.  Let $\tilde\mu(k)=0$ for $k<k_0$ and $\tilde\mu(k)=\mu(k)$ for $k\geq k_0$.  Define $\tilde\nu$ similarly.  Then
\[ \lVert f_\mu-f_\nu\rVert_K \leq \left\lVert \prod\limits_{k=1}^{k_0-1} E_{m_k^\nu}^{\nu(k)}(\cdot/k)\right\rVert_K \lVert f_{\tilde\mu}-f_{\tilde\nu}\rVert_K <\epsilon. \]

\end{document}